\def\C{\mathbb{C}}
\def\CP{\mathbb{C}P}
\def\R{\mathbb{R}}
\def\RP{\mathbb{R}P}
\def\calH{\mathcal H}
\def\calU{\mathcal U}
\def\calV{\mathcal V}
\def\calW{\mathcal W}
\def\ve{\mathbf e}
\def\w{\omega}
\def\wo{\overline{\omega}}
\def\ri{\mathrm{i}}
\def\Atilde{\widetilde{A}}
\def\Phat{\widehat{P}}
\def\Qhat{\widehat{Q}}
\def\II{\operatorname{II}}
\def\tr{\operatorname{tr}}
\def\rJ{\mathsf{J}}
\def\rA{\mathsf{A}}
\def\rAtilde{\widetilde{\rA}}
\def\Mhat{\widehat{M}}
\def\F{\EuScript{F}}
\def\ehat{\hat{\ve}}
\def\Ahat{\widehat{A}}
\def\frakB{\mathfrak{B}}
\def\I{\mathcal I}
\def\J{\EuScript{J}}
\def\ve{\mathbf{e}}
\renewcommand{\Re}{\operatorname{Re}}
\renewcommand{\Im}{\operatorname{Im}}
\theoremstyle{definition}
\newtheorem{theorem}{Theorem}
\newtheorem{cor}[theorem]{Corollary}
\newtheorem{prop}[theorem]{Proposition}
\newtheorem*{theorem*}{Theorem}
\newtheorem{conj}[theorem]{Conjectures}
\newtheorem{defn}{Definition}
\theoremstyle{remark}
\newtheorem{remark}[theorem]{Remark}
\newtheorem*{remark*}{Remark}
\begin{document}
\title{Stark Hypersurfaces in Complex Projective Space}
\author{Thomas A. Ivey}
\address{Dept. of Mathematics, College of Charleston, 66 George St., Charleston SC 29424}

\begin{abstract}
Stark hypersurfaces are a special class of austere hypersurface in $\CP^n$ where
the shape operator is compatible with the $CR$-structure.  In this paper, the possible shape operators
for stark hypersurfaces are completely determined, and stark hypersurfaces in $\CP^2$ are
constructed as integrals of a Frobenius exterior differential system.
\end{abstract}

\date{August  2015}
\maketitle

\section{Introduction}
Stark hypersurfaces are a special case of {\em austere submanifolds}.  The motivation for studying austere
submanifolds comes from the subject of calibrated geometry (in particular, special Lagrangian submanifolds) which
was pioneered by Harvey and Lawson in the early 1980s \cite{HL}.

On any K\"ahler manifold $X^{2n}$ the K\"ahler form and its powers (suitably normalized) are
calibrations, but on a Ricci-flat K\"ahler manifold the real part of the holomorphic
volume form $\Theta$ is also a calibration.  A real $n$-dimensional submanifold is {\em special Lagrangian}
if it calibrated by $\Theta$.
More generally, $\Re( e^{\ri \phi} \Theta)$, where angle $\phi$ is constant, is also a calibration,
and a submanifold $L$ is said to be special Lagrangian {\em with phase $e^{\ri \phi}$} if it is calibrated by this $n$-form.
Harvey and Lawson showed that this is equivalent to $L$ being Lagrangian and $\Im( e^{\ri \phi} \Theta)\vert_L = 0$.

It's easiest to understand the Lagrangian part of this condition in the flat case, where $X=\C^n$.  If we identify
$\C^n$ with $T\R^n$ (so that the zero section is the real slice and the fibers are tangent to imaginary directions), then the K\"ahler form is (up to sign)
the exterior derivative of the canonical form on the tangent bundle.  Thus, if $M \subset \R^n$ is any submanifold,
then its normal bundle $NM \subset T \R^n$ is automatically Lagrangian.  Harvey and Lawson calculated that
$NM$ is special Lagrangian (with a phase depending on $n$ and the dimension of $M$) if and only if $M$ is {\em austere},
i.e., all odd degree elementary symmetric functions of the eigenvalues of the second fundamental form of $M$, in any normal direction,
vanish.  (Equivalently, the eigenvalues are symmetrically arranged around zero on the real line.)
When $n$ is even examples of austere submanifolds are easy to generate; for, if $M$ is a holomorphic submanifold
of $\R^n \cong \C^{n/2}$ then its second fundamental form satisfies
$$\II(X, \rJ Y) = \II(\rJ X,Y),$$
where $\rJ$ is the complex structure.  Then the austere condition is automatic, since if $X$ is an eigenvector for $\nu \cdot \II$
then $\rJ X$ is an eigenvector for the opposite eigenvalue.

Some results have been obtained in the classification of austere submanifolds of Euclidean space: Bryant \cite{Baustere} classified
those of dimension 2 and 3, and provided models for the second fundamental forms in dimension 4; Dajczer and Florit \cite{DF} classified austere submanifolds
of arbitrary dimension but with Gauss map of rank 2.  Ionel and I \cite{II1} produced a partial classification in dimension 4, and classified
austere 4-folds that are ruled by 2-planes \cite{II2}.  But in this paper we are concerned with austere submanifolds in the curved ambient
space $\CP^n$, where the tangent bundle also carries a Ricci-flat K\"ahler metric due to Stenzel \cite{Stpaper}.

\section{The Austere Conditions}
In his thesis, Stenzel constructed a complete cohomogeneity-one Ricci-flat K\"ahler metric on the tangent space of any compact rank one symmetric space $X$.
We will say that a submanifold $M \subset X$ is {\em austere} if its normal bundle is
special Lagrangian with respect to the Stenzel metric.  This naturally leads to the question of what conditions
austerity imposes on the second fundamental form of $M$ in these geometries.  The first result in this direction was obtained
by Karigiannis and Min-Oo \cite{KM}, for the Stenzel metric on $T S^n$.  They showed that
for any submanifold $M \subset S^n$, the normal bundle $NM$ is always Lagrangian (a non-trivial result,
since Stenzel's K\"ahler form is not the same as the usual symplectic form on the tangent bundle), and
that $M$ is austere if and only if satisfies the Euclidean austere conditions (i.e., the odd degree
symmetric functions of the eigenvalues of $\II$ vanish).

Since the Stenzel metric on $T \R P^n$ is obtained from that on $T S^n$ by quotienting by the antipodal map,
the next space to be investigated is $\CP^n$.  In a work completed in 2014, Ionel and I obtained the following
conditions:
\begin{theorem}[\cite{II3}]  Let $M^k \subset \CP^n$ be a smooth submanifold, let $\nu$ denote an arbitrary
unit normal vector to $M$, and let $\rA_\nu$ denote the second fundamental form of $M$ in the direction of $\nu$
(i.e., $\rA_\nu = \nu \cdot \II$).   Then $M$ is austere if and only if for all $\nu$
\begin{equation}\label{fullAcond}
\rA_\nu^{(2j+1)} = \cos^2\!\theta(\nu) \rAtilde_\nu^{(2j-1)},\qquad j=0, \ldots, \lfloor k/2\rfloor,
\end{equation}
where (i) $\theta(\nu)$ is the angle between $TM$ and $\rJ\nu$ (where $\rJ$ is the ambient complex structure), (ii) $\rAtilde$ is the restriction of $\rA$ to the subspace of $TM$ orthogonal to $\rJ \nu$,
and (iii) the superscripted index indicates the operation of taking the elementary
symmetric function of that degree in the eigenvalues of a matrix representing the quadratic form with respect to an orthonormal basis.
(When the superscript is nonpositive or larger than the size of the matrix representative, the symmetric function in \eqref{fullAcond} is replaced by zero.)
\end{theorem}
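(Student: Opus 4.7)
The plan is to translate the special Lagrangian condition on $NM \subset T\CP^n$, with respect to the Stenzel K\"ahler structure, into explicit polynomial identities in the shape operator $\rA_\nu$, following the template established by Karigiannis--Min-Oo for $S^n$. The essential new feature relative to $S^n$ is the interaction of the ambient complex structure $\rJ$ with the normal direction $\nu$, which is ultimately what produces the $\cos^2\!\theta(\nu)$ coupling in \eqref{fullAcond}.

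At a point $p \in M$ and for each unit normal $\nu$, I would first decompose $\rJ\nu = \cos\theta(\nu)\,e_0 + \sin\theta(\nu)\,\nu^\perp$ with $e_0 \in T_pM$ and $\nu^\perp \in N_pM$ unit vectors, then extend $e_0$ to an orthonormal basis $e_0, e_1, \ldots, e_{k-1}$ of $T_pM$ in which $\rAtilde_\nu$ diagonalizes on $\operatorname{span}(e_1,\ldots,e_{k-1})$. A neighborhood of $(p,0)$ in the total space of $NM$ is parametrized radially via $s\nu$ for small $s$, and the chosen tangent and normal frames are transported along the $\CP^n$-geodesic $\gamma(s) = \exp_p(s\nu)$. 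The resulting Jacobi fields $J_i$, with initial conditions $J_i(0)=e_i$ and $J_i'(0) = -\rA_\nu e_i$, satisfy a Jacobi equation that separates along $\gamma$ into two eigen-blocks of the curvature operator: the $\rJ\gamma'$-direction, with sectional curvature $4$, and its orthogonal complement, with sectional curvature $1$. Solving these two ODEs in closed form produces explicit expressions for the $J_i$ in which the projection $\langle e_0, \rJ\nu\rangle = \cos\theta(\nu)$ appears as a coefficient, seeding the $\cos^2\!\theta(\nu)$ factor in the final identities.

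With the Jacobi fields in hand, I would apply Stenzel's explicit K\"ahler potential to compute the pullbacks of the K\"ahler form and of the holomorphic volume form $\Theta$ to $NM$. The Lagrangian condition should hold automatically for any $M \subset \CP^n$ as a consequence of the rank-one symmetric structure underlying the Stenzel construction, paralleling the corresponding statement for $NM \subset TS^n$. The phase condition $\Im(e^{\ri\phi}\Theta)|_{NM} \equiv 0$, for a suitable constant $\phi$ determined by $n$ and $k$, then expands as a Taylor series in $s$ whose coefficients are universal polynomials in the eigenvalues of $\rA_\nu$ and $\rAtilde_\nu$. Setting the coefficient of $s^{2j+1}$ to zero for each $j$ and unwinding the bookkeeping should yield \eqref{fullAcond}.

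I expect the main technical obstacle to be precisely this last step: organizing the contributions from the two curvature eigen-blocks so that the net coefficient of $s^{2j+1}$ factors as $\rA_\nu^{(2j+1)} - \cos^2\!\theta(\nu)\,\rAtilde_\nu^{(2j-1)}$. Because $e_0$ has components both along $\rJ\nu$ and orthogonal to it, a single Jacobi field contributes to several eigenvalue sums at once, and disentangling these contributions to recover the clean stated identity is where careful accounting will be required. Once that algebraic identity is established, the theorem follows by requiring it to hold for every unit normal $\nu$ and every $j = 0, \ldots, \lfloor k/2\rfloor$.
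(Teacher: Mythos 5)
A preliminary remark on context: the paper states this theorem as an import from \cite{II3} and contains no proof of it, so there is no in-paper argument to compare yours against; I can only measure your outline against what such a proof must contain. Your overall strategy---identify $NM$ with a subset of $T\CP^n$ carrying the Stenzel/adapted complex structure, compute the differential of the inclusion via Jacobi fields along the normal geodesics $\exp_p(s\nu)$ (whose Jacobi operator splits into the curvature-$4$ eigenline spanned by $\rJ\gamma'$ and the curvature-$1$ complement), and extract the austere conditions from the odd Taylor coefficients of $\Im(e^{\ri\phi}\Theta)\vert_{NM}$---is the natural adaptation of the Karigiannis--Min-Oo computation \cite{KM} from $S^n$, and the decomposition $\rJ\nu=\cos\theta(\nu)\,e_0+\sin\theta(\nu)\,\nu^\perp$ is indeed the mechanism by which the angle enters.

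As written, however, this is a plan rather than a proof, and the gaps sit exactly where the content of the theorem lies. First, the Lagrangian condition is asserted to ``hold automatically\ldots paralleling'' the sphere case; for $TS^n$ this was itself a nontrivial computation with the Stenzel potential (the paper explicitly flags it as such), and for $\CP^n$ it needs its own verification---moreover you never produce the holomorphic volume form $\Theta$ on the complexification of $\CP^n$, without which the phase condition cannot even be written down. Second, and more seriously, the decisive step---showing that the coefficient of $s^{2j+1}$ in the expansion factors as $\rA_\nu^{(2j+1)}-\cos^2\!\theta(\nu)\,\rAtilde_\nu^{(2j-1)}$---is precisely where the mixed Jacobi field $J_0$, whose initial value $e_0$ has components in both curvature eigenblocks, couples the two families of solutions $\cos 2s,\tfrac12\sin 2s$ and $\cos s,\sin s$ inside a determinant expansion. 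You correctly identify this as ``the main technical obstacle'' and then stop. Since everything preceding that point is standard setup, the proposal does not yet establish the equivalence in either direction; to become a proof it needs the explicit determinant computation and the combinatorial identity that collapses it to \eqref{fullAcond}, together with the argument that the conditions for all $j$ and all unit $\nu$ are jointly equivalent to the vanishing of $\Im(e^{\ri\phi}\Theta)\vert_{NM}$ on all of $NM$, not merely along the zero section.
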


When $M$ is a hypersurface these conditions become simpler: there is only one unit normal vector (up to a minus sign),
so $\rA_\nu$ is replaced by the shape operator $\rA$; the subspace orthogonal
to $\rJ \nu$ belongs to the holomorphic distribution $\calH$ on $M$; and  $\theta=0$, so that the above conditions become
\begin{equation}\label{hypercond}
\rA^{(2j+1)} = \rAtilde^{(2j-1)}, \qquad j=0, \ldots, n-1,
\end{equation}
where $\rAtilde$ denotes the restrction of $\rA$ to $\calH$.

Recall that the Fubini-Study metric on $\CP^n$ is such that the Hopf fibration $\pi:S^{2n+1} \to \CP^n$ is
a Riemannian submersion, with respect to the round metric on the sphere.  (We take the sphere to have radius
one, so that $\CP^n$ has holomorphic sectional curvature equal to 4.)  The simplified conditions \eqref{hypercond}
let us relate austere hypersurfaces in $\CP^n$ with those in $S^{2n+1}$:

\begin{cor}\label{sphereit} A hypersurface $M \subset \CP^n$ is austere if and only if $\Mhat = \pi^{-1}(M)$ is austere in $S^{2n+1}$.
\end{cor}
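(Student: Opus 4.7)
The plan is to relate the shape operator $\widehat{\rA}$ of $\Mhat$ in $S^{2n+1}$ to the shape operator $\rA$ of $M$ in $\CP^n$ via O'Neill's formulas for the Riemannian submersion $\pi$, and then reduce the austere conditions of Karigiannis--Min-Oo on $\Mhat$ to the conditions \eqref{hypercond} on $M$ via a characteristic-polynomial identity.

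First, I would set up an adapted frame at a point $\widehat p \in \Mhat$ lying above $p = \pi(\widehat p)$. Choose a unit normal $\nu$ to $M$ with horizontal lift $\widehat\nu$, which is automatically a unit normal to $\Mhat$. Set $e_0 = \rJ\nu$ and extend to an orthonormal basis $e_0, e_1, \ldots, e_{2n-2}$ of $T_p M$ in which $\rAtilde$ is diagonal with eigenvalues $\lambda_i$, and write $\rA e_0 = \mu e_0 + \sum_i \beta_i e_i$. Then the vertical vector $V = \rJ\widehat p$ together with the horizontal lifts $\widehat e_0, \widehat e_1, \ldots, \widehat e_{2n-2}$ form an orthonormal basis of $T_{\widehat p}\Mhat$. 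Applying O'Neill's formula $\nabla^{S}_{\widehat X}\widehat Y = \widehat{\nabla^{\CP^n}_X Y} + \tfrac12 [\widehat X, \widehat Y]^{\mathcal V}$ for basic horizontal fields, together with the Hopf-curvature identity $[\widehat X, \widehat Y]^{\mathcal V} = -2\langle \widehat X, \rJ\widehat Y\rangle V$, and the direct computation $\nabla^S_V \widehat\nu = \rJ\widehat\nu = \widehat e_0$, I expect to obtain $\widehat\rA V = -\widehat e_0$, $\widehat\rA \widehat e_0 = -V + \mu\widehat e_0 + \sum_i \beta_i\widehat e_i$, and $\widehat\rA\widehat e_i = \lambda_i\widehat e_i + \beta_i\widehat e_0$ for $i \geq 1$. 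In block form this reads
\[
\widehat\rA = \begin{pmatrix} 0 & -\ve_0^{\,T} \\ -\ve_0 & \rA \end{pmatrix}
\qquad \text{in the basis } (V, \widehat e_0, \widehat e_1, \ldots, \widehat e_{2n-2}),
\]
where $\rA$ denotes the matrix of the shape operator of $M$ in the basis $(e_0,\ldots,e_{2n-2})$.

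The next step is purely algebraic: I would compute the characteristic polynomial of $\widehat\rA$ via a Schur-complement argument to obtain
\[
\det(tI - \widehat\rA) = t\,\det(tI - \rA) - \det(tI - \rAtilde),
\]
noting that the $(0,0)$-minor of $tI - \rA$ in the chosen basis is exactly $\det(tI - \rAtilde)$. Comparing coefficients of $t^{2n-k}$ then yields the identity
\[
\widehat\rA^{(k)} = \rA^{(k)} - \rAtilde^{(k-2)}, \qquad k = 0, 1, \ldots, 2n,
\]
with $\rAtilde^{(j)} = 0$ for $j < 0$. Setting $k = 2j+1$ shows that the Euclidean austere condition on $\Mhat$, namely $\widehat\rA^{(2j+1)} = 0$ for all $j$, is equivalent to \eqref{hypercond}. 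Since Karigiannis--Min-Oo established that $\Mhat \subset S^{2n+1}$ is austere (in the Stenzel sense) iff it satisfies the Euclidean austere conditions, this completes the equivalence.

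The main obstacle will be the shape-operator computation on $\Mhat$: tracking sign conventions in O'Neill's tensor, and verifying that $V$ couples to $\widehat e_0$ with the same $-1$ off-diagonal entry in both directions (which is precisely what forces the Hopf curvature term to have magnitude one, via $\langle \widehat e_0, \rJ\widehat\nu\rangle = |\rJ\nu|^2 = 1$). Once the block form of $\widehat\rA$ is secured, the passage from the austere conditions in $S^{2n+1}$ to those in $\CP^n$ is essentially formal.
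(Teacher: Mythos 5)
Your proposal is correct and follows essentially the same route as the paper: both compute the shape operator of $\Mhat$ in block form, with the vertical (Hopf) direction coupling only to the $\rJ\nu$-direction with a unit entry, and then read off the identity $\Ahat^{(k)} = \rA^{(k)} - \rAtilde^{(k-2)}$ from the characteristic polynomial to match the Euclidean austere conditions of Karigiannis--Min-Oo against \eqref{hypercond}. The only difference is presentational --- you derive the block form via O'Neill's submersion formulas while the paper uses moving frames and the connection-form relations of \cite{HopfHN} --- and your sign conventions differ harmlessly since the $\pm 1$ off-diagonal entries enter the symmetric functions only through their square.
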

\begin{proof} We compute using moving frames along $M$ and $\Mhat$.  We will say that 
a orthonormal moving frame $(\ve_1, \ldots, \ve_{2n})$ is {\em unitary} if $\rJ \ve_1 = \ve_2$, $\rJ \ve_3 = \ve_4$, etc.,
and we let $\F$ denote the bundle of unitary frames on $\CP^n$.
We'll say that a unitary frame along hypersurface $M$ is {\em adapted} if $\ve_{2n}$ is normal to $M$.

Suppose $f:M \to \F\vert_M$ is a local section giving an adapted unitary frame.  
Then the connection forms on $\F$ satisfy $f^* \w^{2n}_i = A_{ij} \wo^j$, 
where $\wo^j$ denotes the 1-forms of the dual coframe on $M$ (for $1\le i,j \le 2n-1$) and 
$A_{ij}$ are the components of the shape operator of $M$ in this coframe.

Now take an orthonormal frame $\ehat_0, \ldots, \ehat_{2n}$ along $\Mhat$ such that $\ehat_0$ is tangent
to the Hopf fibers (and in fact is $\ri$ times the position vector of the point $z \in \Mhat \subset S^{2n+1} \subset \C^{n+1}$)
and $\pi_* \ehat_\alpha = \ve_\alpha$ for $1\le \alpha \le 2n$.  Then one easily computes (using the relationships between
the connection forms of the two moving frames, as laid out in \S1 of \cite{HopfHN}) that relative to the basis $(\ehat_0, \ldots, \ehat_{2n-1})$
for the tangent space to $\Mhat$,
the shape operator of $\Mhat$ is represented by
$$\Ahat = 
\left(\begin{array}{cccc}
0 & 0 & \ldots & 1 \\
0 & \multicolumn{2}{c}{\multirow{2}{*}{\scalebox{1}{$\Atilde$}}}& \multirow{2}{*}{\scalebox{1}{$v$}}\\
\vdots & & & \\
1 & \multicolumn{2}{c}{v^t} & \alpha
\end{array}\right),
\text{ where } A = \begin{pmatrix} \Atilde & v \\ v^t & \alpha \end{pmatrix}.
$$
Here, we use $\Atilde$ to denote the result of omitting the last row and column of $A$, yielding
a matrix representing the restriction of the second fundamental form to $\calH$.  Then one computes
$$\det(\lambda I - \Ahat) = \lambda^{2n} - A^{(1)} \lambda^{2n-1} + (A^{(2)}-1) \lambda^{2n-2} - (A^{(3)}-\Atilde^{(1)}) \lambda^{2n-3} + \ldots,$$
so that the odd-degree symmetric functions of the eigenvalues of $\Ahat$ vanish exactly when $M$ satisfies the austere condition.
\end{proof}

Austere hypersurfaces remain unclassified, even in low-dimensional spaces such as $\CP^2$.
It is unlikely that Corollary \ref{sphereit} extends to general codimension, since the austere condition in $\CP^n$
in a given normal direction $\nu$ involves the angle between $\rJ \nu$ and the tangent space, while the austere condition
in the sphere is the same in all normal directions.  The result doesn't necessarily help us classify austere hypersurfaces
in $\CP^n$ either, since the corresponding problem in $S^{2n+1}$ for $n\ge 2$ is unsolved, and only those hypersurfaces that are unions of
Hopf fibers will descend to $\CP^n$.

\section{Stark Hypersurfaces and their Shape Operators}
In order to sharpen the austere condition, and construct non-trivial examples, we make the following

\begin{defn}
A hypersurface $M \subset\CP^{n+1}$ is {\bf stark} if $M$ is austere and $\rAtilde$
is compatible with the complex structure, i.e., $\rAtilde(\rJ X, \rJ Y) = \rAtilde(X,Y)$ for $X,Y \in \calH$.
\end{defn}
If $M$ is stark, then the right-hand side of \eqref{hypercond} is zero, so that $M$ also satisfies
the Euclidean austere conditions $\rA^{(2j+1)}=0$.  (The name `stark' was chosen over more awkward
terms such as `doubly-austere'.)  For convenience, we now take the dimension of $M$ to be $2n+1$.

In general, oriented hypersurfaces in complex space forms are endowed with two natural tensor fields: the structure vector $W = -\rJ \nu$ and
the restriction $\varphi$ of the complex structure to the tangent space.
(Thus, the image of $\varphi$ is the distribution $\calH$ and its kernel is spanned by $W$.)
In what follows, we will need two different choices of orthonormal basis $\frakB=(\ve_1, \ldots, \ve_{2n+1})$ for $TM$
which differ in how $\varphi$ is represented.
We'll say $\frakB$ is a {\em standard basis} if $\ve_{2n+1}=W$ and $\varphi(\ve_j) = \ve_{j+n}$ for $1\le j \le n$, so that
$$[\varphi]_{\frakB} = J_n := \begin{pmatrix} 0 & -I_n & 0 \\ I_n & 0 & 0 \\ 0 & 0 & 0 \end{pmatrix}.$$
(We use square brackets to denote the matrix representative, with respect to a particular basis, for a linear
transformation or a quadratic form on the tangent space.)
We'll say $\frakB$ is a {\em split basis} if it is adapted to a splitting of $\calH$ into $\varphi$-invariant subspaces, so that
\begin{equation}\label{splitphi}
[\varphi]_{\frakB} = J_{k,\ell} := \begin{pmatrix} 0 & -I_k & 0  & 0 & 0 \\ I_k & 0 & 0 & 0 & 0 \\ 0 & 0 & 0 & -I_\ell &  0  \\ 0 & 0 & I_\ell & 0 & 0 \\ 0 & 0 & 0 & 0 & 0\end{pmatrix}, \qquad k+\ell = n.
\end{equation}

\begin{prop}\label{nonHopf} No stark hypersurface is Hopf (i.e., has $W$ as a principal vector).
\end{prop}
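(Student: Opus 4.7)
The plan is a proof by contradiction: assume $M$ is stark and simultaneously Hopf, so $\rA W = \alpha W$ at every point. The first observation is that symmetry of $\rA$ together with principality of $W$ makes $\calH = W^\perp$ an $\rA$-invariant subspace (for $X\in\calH$, $\langle \rA X, W\rangle = \langle X,\rA W\rangle = \alpha\langle X,W\rangle = 0$), so in a standard basis $[\rA]$ is block-diagonal $\mathrm{diag}([\rAtilde],\alpha)$ and $\rAtilde$ acts as a genuine operator on $\calH$.

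Next I would invoke the classical Hopf structure relation for real hypersurfaces in $\CP^{n+1}$ with holomorphic sectional curvature $4$ (due to Maeda; see Cecil--Ryan's treatise on real hypersurfaces in complex space forms): the function $\alpha$ is locally constant on $M$, and for each principal vector $X\in\calH$ of $\rAtilde$ with eigenvalue $\lambda$, the vector $\varphi X$ is again a principal vector of $\rAtilde$, with eigenvalue $\mu = (\lambda\alpha + 2)/(2\lambda-\alpha)$. (The derivation comes from the Codazzi equation with one argument equal to $W$, combined with the K\"ahler identities.) The stark compatibility $\rAtilde(\varphi X,\varphi Y)=\rAtilde(X,Y)$ makes $\rAtilde$ commute with $\varphi|_\calH$, which forces $\mu=\lambda$; substituting into the structure relation yields $\lambda^2 - \alpha\lambda - 1 = 0$, so $\rAtilde$ has at most two distinct eigenvalues $\lambda_\pm$ with $\lambda_+\lambda_- = -1$ and $\lambda_+ + \lambda_- = \alpha$, each occurring with even multiplicity $2p$ and $2q$ ($p+q=n$) since $\varphi$ pairs real eigenvectors.

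Combining with the identities $\rA^{(1)}=0$ and $\rAtilde^{(1)}=0$ (valid by the observation just after the definition of stark that the right-hand side of \eqref{hypercond} vanishes), the two linear constraints $2p\lambda_+ + 2q\lambda_- + \alpha = 0$ and $2p\lambda_+ + 2q\lambda_- = 0$ together force $\alpha = 0$, $\lambda_\pm = \pm 1$, and $p=q$; in particular, odd $n$ is immediately impossible. For even $n$ the remaining algebraic profile is the spectrum $\{+1^{(n)},-1^{(n)},0\}$ of $\rA$ on $TM$, which matches the shape operator of the tube around $\CP^{n/2}\subset\CP^{n+1}$ at radius $\pi/4$. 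The final step --- and the main technical obstacle --- is to rule out this residual candidate by an integrability argument, applying the Codazzi equation to pairs of principal vectors in $\calH$ together with the K\"ahler identity $\nabla_X W = -\varphi\rAtilde X$ to derive a contradiction with the stark structure propagating consistently across a neighborhood; I expect the contradiction to surface in the $W$-component of Codazzi once one tracks the holonomy of the $\pm 1$-eigendistributions.
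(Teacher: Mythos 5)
Your argument stalls exactly where you flag the ``main technical obstacle,'' and the obstacle is not merely technical: under the reading of the compatibility condition you adopt, the residual candidate cannot be ruled out because it is realized. A Hopf hypersurface with $\alpha=0$, with $\rAtilde$ commuting with $\varphi$ and with spectrum $\{+1^{(n)},-1^{(n)},0\}$ is precisely the tube of radius $\pi/4$ about a totally geodesic $\CP^{n/2}\subset\CP^{n+1}$ (a Type A2 hypersurface, for which $\rA\varphi=\varphi\rA$). That hypersurface satisfies \eqref{hypercond} (both sides vanish, since both spectra are balanced about zero) and satisfies $\rAtilde(\rJ X,\rJ Y)=\rAtilde(X,Y)$ read literally, so no amount of Codazzi bookkeeping will produce the contradiction you hope for. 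The root of the problem is a sign: everywhere the paper actually uses the compatibility condition --- the block form $\left(\begin{smallmatrix}P&Q\\ Q&-P\end{smallmatrix}\right)$ for $[\rAtilde]$ in Prop.~\ref{algprop} and in the proof of Prop.~\ref{nonHopf}, and the claim that the right-hand side of \eqref{hypercond} vanishes, which you invoke to get $\tr\rAtilde=0$ --- it amounts to \emph{anti}-commutation, $\rAtilde\varphi+\varphi\rAtilde=0$, i.e.\ $\rAtilde(\rJ X,Y)=\rAtilde(X,\rJ Y)$ as in the introduction's discussion of holomorphic submanifolds. You cannot consistently use both $\tr\rAtilde=0$ (which needs anti-commutation) and $\mu=\lambda$ (which needs commutation) in the same argument.

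With the anti-commuting reading your own machinery finishes immediately and with no residual case: $\rAtilde\varphi X=-\varphi\rAtilde X$ gives $\mu=-\lambda$, and Maeda's relation then yields $-\lambda(2\lambda-\alpha)=\lambda\alpha+2$, i.e.\ $\lambda^2=-1$, a contradiction (so the trace argument is not even needed). This is in substance what the paper does: it uses the traces only to conclude $\langle\rA W,W\rangle=0$, observes that Hopf plus stark forces $\rA\varphi+\varphi\rA=0$ on all of $TM$, and cites the known nonexistence of such hypersurfaces (Cor.~2.12 of \cite{RyanSurvey}), whose proof is exactly the eigenvalue computation you set up. So your route is essentially the intended one; correct the sign of the commutation and the even-$n$ case you could not close simply disappears.
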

\begin{proof}
Suppose that $M$ is Hopf.   The stark conditions $\tr \rA = \tr \rAtilde = 0$ imply that $\langle \rA W, W\rangle = 0$,
so that $M$ is pseudo-Einstein.  Thus, with respect to a standard basis,
$$[\rA]_{\frakB} = \begin{pmatrix} P & Q & 0 \\ Q & -P & 0 \\ 0 & 0 & 0 \end{pmatrix},$$
where $P$ and $Q$ are $n \times n$ symmetric.  
However, in this case $\rA \varphi + \varphi \rA = 0$, and it is known that such hypersurfaces do not exist
(see Cor. 2.12 in \cite{RyanSurvey}).
\end{proof}

In the rest of this section, we will determine matrix representatives for shape operators
 satisfying the stark conditions.

\begin{prop}\label{algprop}  Let $M \subset \CP^{n+1}$ be a stark hypersurface with shape operator $\rA$.
Then at every point $m \in M$, there is either a choice of standard basis such that
\begin{equation}\label{irredform}
[\rA]_{\frakB} = \begin{pmatrix} 0 & S & d \\ S & 0 & 0 \\ d^t & 0 & 0 \end{pmatrix},
\end{equation}
where $S$ is $n\times n$ symmetric and $d$ is a nonzero vector of length $n$,
or a choice of split basis such that
\begin{equation}\label{redform}[\rA]_{\frakB} = \begin{pmatrix} P & Q & 0 & 0 & 0 \\ Q & -P & 0 & 0 & 0 \\ 0 & 0 & 0 & S & d \\ 0 & 0 & S & 0 & 0 \\
0 & 0 & d^t & 0 & 0  \end{pmatrix},
\end{equation}
where $P, Q$ are $k\times k$ symmetric for $k>0$, $S$ is $\ell \times \ell$ symmetric and $d$ is a vector of length $\ell$.
\end{prop}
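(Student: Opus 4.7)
The plan is to work in complex language, where the $U(n)$ action on standard bases is natural. In any standard basis the stark hypotheses --- trace-freeness together with $\varphi$-compatibility of $\rAtilde$, exactly as used in the proof of Proposition~\ref{nonHopf} --- force
\[
[\rA]_\frakB = \begin{pmatrix} A & B & u_1 \\ B & -A & u_2 \\ u_1^t & u_2^t & 0 \end{pmatrix}
\]
with $A, B$ symmetric $n \times n$. Identifying $\calH \cong \C^n$ via $\ve_{n+j} = \varphi \ve_j$ turns this data into a complex-symmetric matrix $C := A + \ri B$ and a vector $w := u_1 + \ri u_2$, with $\rAtilde(z) = C\overline{z}$ and $\rA W = w$; the residual freedom in the choice of standard basis is $U(n)$, acting by $(C, w) \mapsto (U^* C \overline{U},\ U^* w)$. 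Proposition~\ref{nonHopf} guarantees $w \neq 0$.

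Next I would distill the arithmetic content of austerity. A Schur-complement expansion gives $\det(\lambda I - \rA) = \lambda\, q(\lambda) - r(\lambda)$, where $q(\lambda) := \det(\lambda I - \rAtilde)$ is automatically even (its roots are the Takagi values $\pm\sigma$ of $C$, paired by $\varphi$-compatibility) and $r(\lambda) := u^t \operatorname{adj}(\lambda I - \rAtilde)\, u$. The austere requirement ($\rA^{(2j+1)} = 0$ for all $j$) is equivalent to $r$ being an odd polynomial, and after spectral expansion in the eigenbasis of $\rAtilde$ this reduces to
\[
|\pi_{+\sigma}\, u| = |\pi_{-\sigma}\, u| \quad\text{for every nonzero Takagi value } \sigma,
\]
where $\pi_{\pm\sigma}$ project onto the $\pm\sigma$-eigenspaces of $\rAtilde$. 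I would then split $\calH = \calH_1 \oplus \calH_2$ invariantly, taking $\calH_2$ to be the smallest real subspace that contains $u$ and is closed under both $\rAtilde$ and $\varphi$; its orthogonal complement $\calH_1$ is likewise invariant, and $\rA$ preserves $\calH_1$ (with no coupling to $W$, since $u \in \calH_2$) and $\calH_2 \oplus \langle W\rangle$. On $\calH_1$ the shape operator is just the $\varphi$-compatible symmetric operator $\rAtilde|_{\calH_1}$, so in a standard basis of $\calH_1$ of real dimension $2k$ it takes the block form $\begin{pmatrix} P & Q \\ Q & -P \end{pmatrix}$ with $P, Q$ symmetric $k \times k$ --- the upper-left block of \eqref{redform}.

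The main task is the normal form on $\calH_2 \cong \C^\ell$: I want to produce a unitary basis in which $C|_{\calH_2} = \ri S$ for some $S \in \operatorname{Sym}_\ell(\R)$ and $w = d \in \R^\ell$. Apply a Takagi decomposition to reduce $C|_{\calH_2}$ to $\operatorname{diag}(\sigma_j)$ with $\sigma_j \geq 0$; the austere relation then forces, for each index with $\sigma_j > 0$, that $w_j = \alpha_j + \ri \beta_j$ satisfies $|\alpha_j| = |\beta_j|$, so $\arg w_j \equiv \pm \pi/4 \pmod{\pi}$. Applying the individual phase rotation $\ve_j \mapsto e^{\ri \phi_j} \ve_j$ with $\phi_j = \arg w_j$ then sends $w_j$ onto the positive real axis and simultaneously converts $C_{jj} = \sigma_j$ into $\pm \ri \sigma_j$, yielding $C|_{\calH_2} = \ri S$ with $S$ real diagonal (having $\pm\sigma_j$ entries) and $d$ real. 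For zero-eigenvalue directions of $C|_{\calH_2}$ and for multi-dimensional Takagi eigenspaces, the $O(m_\sigma)$-stabilizer of $\Sigma$ and the residual $U(m_0)$ on $\ker(C|_{\calH_2})$ absorb the extra freedom, and the fact that $u$ generates $\calH_2$ cyclically (from the definition of $\calH_2$) ensures these local adjustments patch into a single global basis. The main obstacle I anticipate is exactly this coordination across Takagi eigenspaces of higher multiplicity, where one must use cyclicity to rule out obstructions to the simultaneous normalization of $C$ and $w$. If $\calH_1 = 0$ the output is \eqref{irredform}; otherwise the two blocks combine into \eqref{redform}.
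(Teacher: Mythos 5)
Your route is genuinely different from the paper's (which inducts on $n$ by restricting $\rA$ and $\varphi$ to the orthogonal complement of $\{W,\rA W\}$ and showing the reduced operator is again stark of lower rank), and most of it is sound: the reduction to a pair $(C,w)$ with $C$ complex symmetric, the bordered-determinant identity $\det(\lambda I-\rA)=\lambda q(\lambda)-r(\lambda)$, and the resulting characterization of austerity as $|\pi_{+\sigma}u|=|\pi_{-\sigma}u|$ for each nonzero Takagi value $\sigma$ are all correct, as is the invariant splitting $\calH=\calH_1\oplus\calH_2$. But the final normalization step has a genuine gap exactly where you flag it, and the mechanism you propose for closing it does not work. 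After Takagi-diagonalizing $C|_{\calH_2}$, a Takagi value $\sigma>0$ of multiplicity $m\ge 2$ contributes a block on which $w=\alpha+\ri\beta$ with $\alpha,\beta\in\R^m$ satisfying only $|\alpha|=|\beta|$; your recipe of coordinatewise phase rotations $\ve_j\mapsto e^{\ri\phi_j}\ve_j$ requires $|\alpha_j|=|\beta_j|$ \emph{for each} $j$, which is strictly stronger. The residual $O(m)$-stabilizer of $\sigma I_m$ rotates $\alpha$ and $\beta$ simultaneously and hence preserves the angle between them, so it cannot repair this; e.g.\ for $C|_{\calH_2}=\sigma I_2$ and $w=(1,\ri)$ (which is cyclic, so cyclicity is not the relevant hypothesis) no combination of diagonal phases and stabilizer rotations produces the normal form.

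The statement survives because the correct move is a \emph{non-diagonal} unitary on each such block. One must allow $U=P\bigl(e^{\ri\pi/4}I_p\oplus e^{-\ri\pi/4}I_q\bigr)P^t$ with $P\in O(m)$; unwinding the conditions, this normalizes the block if and only if there is a real symmetric orthogonal $R$ with $\beta=R\alpha$, and such an $R$ always exists when $|\alpha|=|\beta|$ (take $R=I-2(\alpha-\beta)(\alpha-\beta)^t/|\alpha-\beta|^2$, or $R=\pm I$ in the degenerate cases). The price is that $C|_{\calH_2}$ becomes $-\ri\sigma R$ on that block rather than $\pm\ri\sigma I$ --- i.e.\ $S$ comes out real symmetric but not diagonal, which is all that \eqref{irredform} and \eqref{redform} require. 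With this lemma inserted, your argument goes through and is arguably more transparent than the paper's induction, since it isolates the single arithmetic constraint that austerity imposes on $(C,w)$; as written, however, the multiplicity case is asserted rather than proved, and the appeal to the stabilizer and to cyclicity points at the wrong tools.
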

\noindent
(The two cases are distinguished by whether or not the shape operator preserves a nonzero $\rJ$-invariant subspace of $\calH$.)
\begin{remark} It is easy to see that if the shape operator of $M$ takes either of the above forms, then $M$ is stark.
For example, in the case of \eqref{irredform}, one can check that $R [\rA] + [\rA] R=0$ where $R = \begin{pmatrix} -I_n & 0 \\ 0 & I_{n+1}\end{pmatrix}$; then the eigenvalues of $[\rA]$ are balanced around zero (see, e.g., Example 1 in \cite{Baustere}).
The balanced eigenvalue condition also holds for the matrix \eqref{redform} because it holds individually for the top left $2k\times 2k$ and
bottom right $(2\ell+1) \times (2\ell+1)$ blocks.
\end{remark}

\begin{proof}[Proof of Prop. \ref{algprop}] The stark conditions imply that for a standard basis $\frakB$,
$$[\rA]_{\frakB} = \begin{pmatrix} P & Q & b \\ Q & -P & c \\ b^t & c^t & 0 \end{pmatrix},$$
where $P,Q$ are $n\times n$ symmetric and $b,c$ vectors of length $n$.  If $b=c=0$ then we fall into the second case
(with $\ell=0$)
and we are done.  (However, a hypersurface with this shape operator would be Hopf, and by Prop. \ref{nonHopf} these
do not exist.)  Otherwise, we use the $U_n$ freedom to modify the standard basis so that $c=0$ and
$b=(\beta, 0,\ldots, 0)^t$ for $\beta > 0$.  (Here, $U_n$ denotes the subgroup 
of $O(2n+1)$ that commutes with $J_n$.)
Then, expanding $\det(\lambda I - A)$ and
taking coefficients of even powers of $\lambda$ shows that the $(2n-1) \times (2n-1)$ submatrix
$$
\left( \begin{array}{ccc} \Phat & q_1 & \Qhat \\ q_1^t & \multicolumn{2}{c}{\multirow{2}{*}{\scalebox{1}{$-P$} } }  \\ \Qhat & &  \end{array}\right),
\quad \text{where} \ P = \begin{pmatrix} p_{11} & p_1 \\ p_1^t & \Phat\end{pmatrix} \text{ and } Q = \begin{pmatrix} q_{11} & q_1 \\ q_1^t & \Qhat \end{pmatrix}$$
must be austere.  In particular, taking the trace implies that $p_{11}=0$.

We proceed by induction on $n$.  If $n=1$ then $P=(p_{11})$ is zero and we are done.
Otherwise, let $V' \subset T_m M$ denote the subspace orthogonal to $\{ W, \rA W\}$, let
$\pi_{V'}$ denote orthogonal projection onto $V'$, and let $\psi = \pi_{V'} \circ \varphi$ and
$\rA' = \pi_{V'} \circ \rA$ be restricted to $V'$.  Then
$$[\rA']_{\frakB} = \begin{pmatrix}  \Phat & q_1 & \Qhat \\ q_1^t & 0 & - p_1^t \\ \Qhat & -p_1 & -\Phat \end{pmatrix}, \qquad
[\psi]_{\frakB} = \begin{pmatrix}  0 & 0 & -I_{n-1} \\ 0 & 0 & 0 \\ I_{n-1} & 0 & 0 \end{pmatrix}.
$$
(By abuse of notation, we take the members $(\ve_2,\ldots, \ve_{2n})$ of $\frakB$ as a basis for $V'$.)

\medskip
\noindent {\bf Case (i).}
Suppose that $V'$ contains no proper subspace that is both $\rA'$- and $\psi$-invariant.
Let
$$R_1 = \begin{pmatrix} I_{n-1} & 0 & 0 \\ 0 & 0 & I_{n-1} \\ 0 & 1 & 0\end{pmatrix},
\text{ so that }R_1 [\rA']_{\frakB} R_1^t = \begin{pmatrix} \Phat & \Qhat & q_1 \\ \Qhat & -\Phat & -p_1 \\ q_1^t & -p_1^t & 0 \end{pmatrix}
$$
and $R_1 [\psi]_{\frakB} R_1^t = J_{n-1}$.  

By induction there exists a basis $\hat\frakB$ for $V'$ with respect to which $\rA'$ takes the form \eqref{irredform}
while $\psi$ is represented by $J_{n-1}$.
In other words, there exists a $G\in U_{n-1}$ such that
\begin{equation}\label{AprimeIrred}
[\rA']_{\hat\frakB} = G R_1 [\rA']_{\frakB} R_1^t G^t = \begin{pmatrix} 0 & S & d \\ S & 0 & 0 \\ d^t & 0 & 0 \end{pmatrix}
\end{equation}
where $S$ and $d$ have size $n-1$.  We will use
$$H = \begin{pmatrix} 1 & 0 & 0 \\ 0 & GR_1 & 0 \\ 0 & 0 & 1\end{pmatrix}$$
to change basis so as to put $\rA$ in the form \eqref{irredform}.  Noting that
$$G = \begin{pmatrix} K & -L & 0 \\ L & K & 0 \\ 0 & 0 & 1\end{pmatrix},$$
where $K,L$ are square matrices of size $n-1$, we compute that
\begin{equation}\label{bigHconj}
H [\rA]_{\frakB} H^t = \begin{pmatrix}
0 & (K p_1 - L q_1)^t & * & q_{11} & \beta \\
* & X & * & K p_1 +L q_1 & 0 \\
* & * & Y & L q_1 - K p_1 & 0 \\
* & * & * & 0 & 0 \\
* & 0 & 0 & 0 & 0 \end{pmatrix},
\end{equation}
where
\begin{align*}
X &= K \Qhat L^t + L \Qhat K^t + L \Phat L^t - K \Phat K^t,\\
Y &= K \Phat K^t - L \Phat L^t - L \Qhat K^t - K \Qhat L^t,
\end{align*}
and the $*$'s in \eqref{bigHconj} denote blocks that are either irrelevant or determined by symmetry.

By comparing the central $3\times3$ set of blocks on the  right-hand side of \eqref{bigHconj} with the the right-hand side of \eqref{AprimeIrred},
we see that $X=0$, $Y=0$ and $L q_1 - K p_1=0$.  Now \eqref{bigHconj}
has the desired form, but we also must put $\varphi$ in the correct form.  If we let
$$R_2 = \begin{pmatrix} 0 & I_{n-1} & 0 \\ 1 & 0 & 0 \\ 0 & 0 & I_{n-1} \end{pmatrix}$$
then $R_2 H [\phi]_{\frakB} H^t R_2^t = J_n$, while $R_2 H [\rA]_{\frakB} H^t R_2^t$ is still of the form \eqref{irredform}.

\medskip
\noindent{\bf Case (ii).} In this case, $V'$ contains a subspace of dimension $2k>0$
that is $\rA'$- and $\psi$-invariant.  Let $\ell=n-1-k$.
We first change the basis for $V'$ to one where $[\psi]$ has the split form; we do this
using the change of basis matrix
$$R_1 =
\begin{pmatrix} I_k & 0 & 0 & 0 & 0 \\ 0 & 0 & 0 & I_k & 0 \\ 0 & I_\ell & 0 & 0 & 0\\ 0 & 0 & 0 & 0 & I_\ell \\ 0 & 0 & 1 & 0 & 0\end{pmatrix},$$
which satisfies $R_1 [\psi]_{\frakB} R_1^t=J_{k,\ell}$.

Let $U_{k,\ell} \subset O(2n-1)$ denote the subgroup that commutes with $J_{k,\ell}$.
By induction, there exists a matrix $G \in U_{k,\ell}$ such that $G R_1 [\rA']_{\frakB} R_1^t G^t$ has the form \eqref{redform}.
But since $[\rA']_{\frakB}$ is a submatrix of $[\rA]_{\frakB}$ obtained by omitting the first row and column, before we can put $[\rA]$ in a similar form we must move the $2k\times 2k$
block in \eqref{redform} into the upper left corner of the larger matrix.
Taking
$$G = \begin{pmatrix} D & -E & 0 & 0 & 0 \\ E & D & 0 & 0 & 0 \\ 0 & 0 & K & -L & 0 \\ 0 & 0 & L & K & 0 \\ 0 & 0 & 0 & 0 & 1 \end{pmatrix},$$
where $D,E$ are $k \times k$ and $K,L$ are $\ell\times \ell$, we now use the following matrix to change basis on $V$:
$$H = \begin{pmatrix} 0 & I_{2k} &0 \\ 1 & 0 & 0 \\ 0 & 0 & I_{2\ell+2} \end{pmatrix}
\begin{pmatrix} 1 & 0 & 0 \\ 0 & G R_1 & 0 \\ 0 & 0 & 1 \end{pmatrix}
=  \begin{pmatrix} 0 & W & 0 & \tilde{W} & 0 \\ 1 & 0 & 0 & 0 & 0 \\ 0 & Z & 0 & \tilde{Z} & 0 \\ 0 & 0 & 1 & 0 & 0 \\ 0 & 0 & 0 & 0 & 1 \end{pmatrix}
$$
where
$$W_{2k \times (n-1)} = \begin{pmatrix}D & 0 \\ E & 0 \end{pmatrix}, \quad \tilde{W} = J_k W, \quad
Z_{2\ell\times (n-1)} = \begin{pmatrix}0 & K \\ 0 & L \end{pmatrix}, \quad \tilde{Z} = J_\ell Z.
$$
Using the induction hypothesis, we equate the result of conjugating $[\rA']_{\frakB}$ by $GR_1$ with a $(2n-1) \times (2n-1)$ matrix with block
form \eqref{redform}, and we deduce that
\begin{align*}
(W \Phat + \tilde{W} \Qhat) Z^t + (W \Qhat - \tilde{W}\Phat) \tilde{Z}^t &= 0, & W q_1 - \tilde{W} p_1 &=0,\\
(Z \Phat + \tilde{Z} \Qhat) Z^t + (Z \Qhat - \tilde{Z}\Phat) \tilde{Z}^t &= \begin{pmatrix} 0 & S \\ S & 0 \end{pmatrix},
& Z q_1 - \tilde{Z} p_1 &= \begin{pmatrix} d \\ 0 \end{pmatrix}.
\end{align*}
Using these, we compute that $H [\rA]_{\frakB} H^t$ has the form \eqref{redform} with $\ell$ replaced by $\ell+1$.
However, in order to ensure that $\varphi$ is represented by $J_{k,\ell+1}$,
we need to conjugate once more, by the matrix
$$R_2 = \begin{pmatrix} I_{2k} & 0 & 0 & 0 \\ 0 & 0 & I_{\ell} & 0 \\ 0 & 1 & 0 & 0 \\ 0 & 0 & 0 & I_{\ell+2} \end{pmatrix},$$
which preserves the form of $H [\rA]_{\frakB} H^t$.
\end{proof}

\section{Geometry of Stark 3-folds}
In this section we will use moving frames to investigate the geometry of stark hypersurfaces in $\CP^2$.
In particular, we will show that these hypersurfaces carry two perpendicular foliations, by helices and
by open subsets of totally geodesic $\RP^2$'s.  As discussed in the next section, we expect that these features generalize
to higher dimensions.

We will define an exterior
differential system whose integral manifolds are in one-to-one correspondence with local unitary frames
along stark hypersurfaces in $\CP^2$.  Using this system, we will show that these hypersurfaces are essential
described by a system of ordinary differential equations.

Again, let $\F$ be the unitary frame bundle of $\CP^2$.  This is a $U(2)$-subbundle of the full orthonormal frame bundle,
to which the canonical forms $\w^a$ and connection forms $\w^a_b$ restrict to satisfy the usual structure equations,
with the additional
relations $\w^1_3 = \w^2_4$ and $\w^3_2=\w^1_4$.  The curvature 2-forms on $\F$ are
\begin{align*}
\Psi^1_2 &= 4\w^1 \wedge \w^2 + 2\w^3 \wedge \w^4, & \Psi^1_3 = \Psi^2_4 &= \w^1 \wedge \w^3 + \w^2 \wedge \w^4, \\
\Psi^3_4 &= 2 \w^1 \wedge \w^2 + 4 \w^3 \wedge \w^4,& \Psi^3_2=\Psi^1_4 &= \w^1 \wedge \w^4 - \w^2 \wedge \w^3.
\end{align*}
(A careful derivation of these curvature forms is given in \S1 of \cite{HopfHN}.)

Using the reproducing property of the canonical forms, we see that adapted unitary frames along a hypersurface $M$
are precisely the sections of $\F\vert_M$ along which $\w^4$ vanishes.  Furthermore, if $A_{ij}$
are components of the shape operator with respect to the standard basis $(\ve_1,\ve_2,\ve_3)$, then
the 1-forms $\w^4_j - A_{ij} \w^j$ also vanish along this section.  Given these facts, we
define an exterior differential system $\I$ on $\F\times \R^2$ generated by the 1-forms
$$\theta_0 :=\w^4,\quad
\theta_1:=-\w^4_1 + \mu \w^2 + \beta \w^3,\quad
\theta_2 := -\w^4_2 + \mu \w^1, \quad
\theta_3:=-\w^4_3 +\beta \w^1.
$$
(The components $\beta,\mu$ of the shape operator are introduced as extra variables, and are coordinates
on the $\R^2$ factor.)
Then integral 3-folds of $\I$ 
are in one-to-one correspondence with stark hypersurfaces equipped with a standard moving frame with respect to
which the shape operator has the form predicted by Prop. \ref{algprop}, i.e.,
$$[\rA] = \begin{pmatrix} 0 & \mu & \beta \\ \mu & 0 & 0 \\ \beta & 0 & 0 \end{pmatrix}.$$
(We will assume that all integral submanifolds of $\I$ and its prolongations satisfy the usual independence condition $\w^1 \wedge \w^2 \wedge \w^3 \ne 0$.)
By Prop. \ref{nonHopf}, the set of points where $\beta$ vanishes has empty interior; therefore,
we will restrict our attention to the open subset where $\beta \ne 0$.

We begin to determine the solution space of $\I$ by calculating the system 2-forms (thus completing
a set of algebraic generators for $\I$).  We compute $d\theta_0 \equiv 0$  and
\begin{equation}\label{the2forms}
\left.
\begin{aligned}
d\theta_1 &\equiv -2\mu\, \w^2_1 \wedge \w^1+ \pi_1 \wedge \w^2 + \pi_2 \wedge \w^3,\\
d\theta_2 &\equiv \pi_1 \wedge \w^1 + \w^2_1 \wedge (2\mu \w^2+\beta \w^3 ),\\
d\theta_3 &\equiv \pi_2 \wedge \w^1 + \beta\, \w^2_1 \wedge \w^2,
\end{aligned}\right\} \mod \theta_0, \ldots \theta_3,
\end{equation}
where
$$
\pi_1 := d\mu + \beta\mu\, \w^2 + (\beta^2 -\mu^2 -1) \w^3, \qquad
\pi_2 := d\beta - 2(\mu^2 + 1) \w^2-3\beta \mu\, \w^3.
$$

\begin{prop}\label{ruledM} Any stark hypersurface $M \subset \CP^2$ is ruled by totally geodesic surfaces
tangent to the 2-dimensional nullspace of $\rA$ that contains the structure vector.  These surfaces are open subsets of copies of $\RP^2$.
\end{prop}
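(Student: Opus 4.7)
The plan is to identify the 2-distribution $\calD = \{\w^1 = 0\}$ on $M$, spanned by $\ve_2, \ve_3$ in the standard frame, as the null subspace of $\rA$ referenced in the proposition: by the block form of $[\rA]$ in Prop. \ref{algprop}, the shape operator viewed as a quadratic form vanishes identically on $\mathrm{span}(\ve_2, \ve_3)$, and this subspace contains the structure vector $W = \ve_3$. I would then prove, in order: (i) $\calD$ is Frobenius integrable; (ii) each leaf is totally geodesic in $\CP^2$; and (iii) each leaf is totally real (Lagrangian), so it is locally contained in a standard $\RP^2$.

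For (i) I would exploit the 2-forms in \eqref{the2forms}. On any integral 3-manifold of $\I$, the pullbacks $d\theta_i$ all vanish. Writing $\w^2_1 = a_1 \w^1 + a_2 \w^2 + a_3 \w^3$ along the integral manifold, and expanding $\pi_1, \pi_2$ in the same basis, the $\w^2 \wedge \w^3$ coefficient of $d\theta_3$ is $-\beta a_3$, forcing $a_3 = 0$ since $\beta \ne 0$ by Prop. \ref{nonHopf}. The corresponding coefficient of $d\theta_2$ then reduces to $\beta a_2$, forcing $a_2 = 0$. Hence $\w^2_1 \equiv 0 \pmod{\w^1}$. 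Combining this with the unitary identity $\w^1_3 = \w^2_4 = -\w^4_2 = -\mu \w^1$ (valid along $M$, where $\w^4 = 0$), the structure equation $d\w^1 = -\w^1_2 \wedge \w^2 - \w^1_3 \wedge \w^3$ becomes $d\w^1 \equiv 0 \pmod{\w^1}$, so $\calD$ is integrable.

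For (ii), note that for $X$ tangent to a leaf $L$ we have $\w^1(X) = 0$, and hence $\w^1_2(X)$, $\w^1_3(X)$, $\w^4_2(X) = \mu \w^1(X)$, and $\w^4_3(X) = \beta \w^1(X)$ all vanish; thus $\nabla^{\CP^2}_X \ve_2$ and $\nabla^{\CP^2}_X \ve_3$ stay in $TL$, so $L$ has vanishing second fundamental form in $\CP^2$. For (iii), since $\rJ \ve_2 = -\ve_1$ and $\rJ \ve_3 = \ve_4$, the plane $\rJ(TL) = \mathrm{span}(\ve_1, \ve_4)$ is the orthogonal complement of $TL$ in $T\CP^2$, so $L$ is totally real. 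A totally geodesic totally real surface in $\CP^2$ has constant Gauss curvature $1$ (since $\CP^2$ has holomorphic sectional curvature $4$), and the standard classification of totally geodesic submanifolds of $\CP^n$ identifies it as an open subset of a totally geodesic $\RP^2$.

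The main obstacle is step (i), specifically extracting $\w^2_1 \equiv 0 \pmod{\w^1}$ from \eqref{the2forms}: though it amounts to matching 2-form coefficients, it is the one point where $\beta \ne 0$ is indispensable, and it is what converts the purely algebraic form of $[\rA]$ into genuine integrability of the null foliation. Once this structural identity is in hand, the totally geodesic and totally real properties follow from direct pointwise computations with the adapted unitary frame.
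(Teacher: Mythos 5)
Your proposal is correct and follows essentially the same route as the paper's proof: integrability of $\{\w^1=0\}$ from the structure equations, the key identity $\w^2_1\equiv 0\bmod\w^1$ extracted from the 2-forms \eqref{the2forms} (your direct coefficient matching is equivalent to the paper's Cartan Lemma application, and your use of $\beta\ne 0$ is exactly where the paper uses it), and then totally geodesic plus totally real implying congruence to an open subset of $\RP^2$ by the classification of totally geodesic submanifolds (Wolf). The only cosmetic difference is the order of steps: the paper establishes integrability before pinning down $\w^2_1$, while you do the reverse.
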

\begin{proof} Let $N \subset \F \times \R^2$ be an integral 3-fold of $\I$ corresponding
to a unitary frame along $M$ for which $\beta \ne 0$.  Since
$$d\w^1 = \w^2_1 \wedge \w^2 +\w^4_2 \wedge \w^3 +\w^4_1 \wedge \w^4 \equiv 0 \mod \I, \w^1$$
(using the 1-form $\theta_2$ and the last 2-form in \eqref{the2forms}),
then $\w^1$ restricts to be integrable on $N$.  Thus, its pullback  to $M$ annihilates
an integrable distribution spanned by $\ve_2, \ve_3$.

Let $\Sigma \subset M$ be a surface tangent to this distribution.  Since $\rJ \ve_3 = \ve_4$ and $\rJ \ve_2 = -\ve_1$
are normal to $\Sigma$, $\Sigma$ is totally real.  Since $\w^4_2, \w^4_3 \equiv 0 \mod \I, \w^1$ then
the second fundamental form of $\Sigma$ in the direction of $\ve_4$ is zero; since $\w^1_3 = \w^4_2$ it remains
only to check that $\w^2_1 \equiv 0 \mod \I, \w^1$ to confirm that $\Sigma$ is totally geodesic.

Because $N$, satisfies the independence condition, $\w^2_1$ must be equal to a linear
combination of $\w^1, \w^2, \w^3$ along $N$.  Applying the Cartan Lemma to the vanishing
of the last 2-form in \eqref{the2forms} implies that $\w^2_1$ must lie in the span of $\w^1$ and $\w^2$,
but doing the same for the second 2-form in \eqref{the2forms} shows that
$\w^2_1$ must lie in the span of $\w^1$ and $\beta \w^3 + 2\mu \w^2$.  Thus $\w^2_1$ must
restrict to $N$ to be a multiple of $\w^1$.

Because $\Sigma$ is totally geodesic and totally real, it is congruent to an open set of
a $\RP^2 \subset \CP^2$ (see Theorem 4 in \cite{Wo}).
\end{proof}

In Prop. \ref{ruledM} we showed that $\w^2_1$ must restrict to
be a multiple of $\w^1$ on any integral element of $\I$ (i.e., the tangent space to
an integral 3-fold satisfying the independence condition).  More formally, we define the {\em prolongation} of $\I$
as the system of differential forms that vanish along these integral elements.  In this case,
we introduce a new coordinate $\kappa$ and define additional 1-forms
\begin{align*}
\theta_4 &:= \w^2_1 - \kappa\, \w^1, \\
\theta_5 &:= \pi_1 - \kappa(2\mu \w^2+\beta \w^3 )\\
\theta_6 &:= \pi_2 - \beta\kappa\, \w^2
\end{align*}
on $\F\times \R^3$; the prolongation $\I'$ is then generated by $\theta_0, \ldots, \theta_6$.

\begin{prop}\label{helical} Let $M$ be a stark hypersurface in $\CP^2$ and let $\gamma$ be a trajectory in $M$
orthogonal to the rulings of Prop. \ref{ruledM}.  Then $\gamma$ is a helix.
Moreover, if $\gamma$ closes up smoothly at length $L$, then so do all other such curves in $M$.
\end{prop}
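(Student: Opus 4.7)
The plan is to identify the trajectory as an integral curve of a simple ODE system in the frame bundle, extract its Frenet-type data, and use the integrability of the prolonged system $\I'$ to show that this data is constant along $\gamma$. The trajectory $\gamma$ is tangent to $\ve_1$, so it is cut out on the integral 3-fold $N'$ by $\w^2=\w^3=0$; parametrizing by arc length so that $\w^1 = ds$ along $\gamma$, the relation $\theta_4=0$ (giving $\w^2_1 = \kappa\w^1$) together with the unitary identifications $\w^1_3 = \w^2_4$ and $\w^3_2 = \w^1_4$ and the shape-operator expressions $\w^4_1 = \mu\w^2+\beta\w^3$, $\w^4_2=\mu\w^1$, $\w^4_3=\beta\w^1$ extracted from $\theta_1,\theta_2,\theta_3$ show that every connection form $\w^i_j$ restricts along $\gamma$ to a multiple of $ds$, with coefficients depending only on $(\kappa,\mu,\beta)$. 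The covariant evolution of the frame $(\ve_1,\ve_2,\ve_3,\ve_4)$ along $\gamma$ in $\CP^2$ is then governed by an explicit $4\times 4$ skew-symmetric matrix $M(\kappa,\mu,\beta)$.

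Next I show $M$ is constant along $\gamma$. Since $\theta_5\equiv 0$ and $\theta_6\equiv 0$ on $N'$, both $d\mu$ and $d\beta$ lie in the span of $\w^2,\w^3$, which immediately gives $d\mu|_\gamma = d\beta|_\gamma = 0$. For $\kappa$, computing $d\theta_4$ modulo the ideal and applying Cartan's Lemma yields
\[ d\kappa = \lambda\,\w^1 + (\kappa^2 + 4 - 2\mu^2)\w^2 + (\kappa\mu - 2\mu\beta)\w^3 \]
with $\lambda$ an a priori free function on $N'$. Substituting this into the computation of $d\theta_5$ on $N'$, I expect the $\w^1\wedge\w^2$ and $\w^1\wedge\w^3$ coefficients to reduce (after the remaining terms cancel automatically) to $-2\mu\lambda$ and $-\beta\lambda$ respectively; since $d\theta_5=0$ on $N'$ and $\beta\ne 0$ by our standing assumption, this forces $\lambda=0$. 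Hence $d\kappa|_\gamma = 0$, so $M$ is constant along $\gamma$. Integrating the linear ODE $\dot F = FM$, the adapted frame is transported along $\gamma$ by a one-parameter subgroup $\{\exp(sX)\}\subset SU(3)$ acting on $\CP^2$, where $X\in\mathfrak{su}(3)$ is assembled from $M$ and the translation $\ve_1$; thus $\gamma$ is an orbit of a one-parameter isometry group, i.e., a helix.

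For the uniform closure assertion, closing $\gamma$ smoothly at length $L$ is equivalent to $\exp(LX)\in SU(3)$ lying in the stabilizer $U(2)$ of $\gamma(0)$, or equivalently to the holonomy of the adapted frame over length $L$ being trivial. A parallel trajectory $\gamma'$ through a nearby leaf of the ruling foliation inherits its own constants $(\kappa',\mu',\beta')$, determining a Killing field $X'$ related to $X$ by the formulas for $d\mu,d\beta,d\kappa$ transverse to the trajectory foliation. The plan is to use the Frobenius closure of a further prolongation of $\I'$ to argue that the holonomy-at-length-$L$ condition is preserved under this transverse deformation, so that $\gamma'$ also closes at length $L$. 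I expect the main obstacle to lie precisely here, since the period of a helix in $\CP^2$ depends in general nontrivially on its Frenet invariants; one concrete route would be to exhibit a globally defined vector field on $M$ whose flow is generated by $\ve_1$ and acts by ambient isometries, thereby forcing all orbits to share a common period.
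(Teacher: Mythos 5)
Your argument for the first assertion is correct and is essentially the paper's: $\theta_5,\theta_6\equiv 0$ on the integral manifold give $d\mu,d\beta\equiv 0\bmod\w^2,\w^3$, hence $\mu,\beta$ constant along $\gamma$, and the structure equations of the prolongation force the $\w^1$-component of $d\kappa$ to vanish. Your coefficients for $d\kappa$ match the paper's 1-form $\pi_3$, and your elimination of $\lambda$ via the $\w^1\wedge\w^2$ and $\w^1\wedge\w^3$ terms of $d\theta_5$ (using $\beta\ne 0$) is just a repackaging of the paper's observation that $\w^1\wedge\pi_3$, $(2\mu\w^2+\beta\w^3)\wedge\pi_3$ and $\beta\,\w^2\wedge\pi_3$ all vanish, so $\pi_3=0$. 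So $\gamma$ has constant Frenet data and is a helix.

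The second assertion is where there is a genuine gap, which you flag yourself. Two points. First, the fallback you propose --- a single global vector field on $M$, tangent to $\ve_1$, whose flow acts by ambient isometries --- cannot work: as the paper's closing remark makes explicit, the one-parameter subgroup of $SU(3)$ generating a given helix \emph{changes} as one moves across the ruling surface, since its conjugacy class is determined by the eigenvalues $\epsilon_1,\epsilon_2,-\epsilon_1-\epsilon_2$ of the traceless Frenet matrix and these vary (neither $A$ nor $B$ is separately constant on $M$). There is no common Killing field, so no common period can be extracted that way. Second, the mechanism the paper actually uses is quantitative and is missing from your proposal: lifting $\gamma$ to $S^5$ gives a constant-coefficient system $dF/ds=FK$ in $U(3)$; smooth closure at length $L$ is equivalent to the eigenvalues of the traceless part $K_0$ being integer multiples of $2\pi\ri/L$, which (since they are imaginary and sum to zero) reduces to rationality of the ratio $\epsilon_1/\epsilon_2$; this ratio is controlled by the scalar $A^3/B^2$ formed from the characteristic polynomial $\lambda^3+A\lambda+\ri B$ of $K_0$; and the decisive computation is that $A^3/B^2$ is \emph{constant on all of $M$}, verified using the full expressions for $d\beta$, $d\mu$, $d\kappa$ (their $\w^2$- and $\w^3$-components, not just the vanishing of their $\w^1$-components). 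That constancy is what transports the rationality condition, and hence smooth closure at length $L$, from one helix to every other; without it the uniform-closure claim is unproved.
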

\begin{proof}
Take a unitary frame $(\ve_1, \ldots, \ve_4)$ along $M$; as noted above, this frame (coupled with the values of $\beta, \mu, \kappa$)
gives a section of $\F \times \R^3$ that is an integral $N$ of $\I'$.
At the same time, the unitary frame  restricts to be a Frenet-type frame along $\gamma$.
Because the connection forms on $\F$ have the property that $\nabla \ve_a = \ve_b \otimes \w^b_a$ for any section,
we can calculate the covariant derivatives of the frame vectors along $\gamma$ by computing the $\w^1$-component
of the corresponding connection form restricted to $N$.
Using $D/ds$ to denote covariant derivative with respect to arclength along $\gamma$, we obtain
\begin{equation}\label{helixF}\dfrac{D\ve_1}{ds} = \kappa \ve_2 + \mu \ve_3, \quad \dfrac{D\ve_2}{ds} =-\kappa \ve_1 + \mu \ve_4,
\quad
\dfrac{D\ve_3}{ds} = -\mu \ve_1 + \beta \ve_4, \quad \dfrac{D\ve_4}{ds} = -\mu \ve_2 - \beta \ve_3.
\end{equation}
Because $\theta_5, \theta_6$ vanish along $N$, $d\mu$ and $d\beta$ have no $\w^1$-component, and thus
$\beta, \mu$ are constant along $\gamma$.  To show $\gamma$ is a helix, we must show that all its remaining
curvature $\kappa$ is constant along $\gamma$.

Differentiating the new 1-forms modulo themselves yields
$$
\left.\begin{aligned}
d\theta_4 &\equiv \w^1 \wedge \pi_3,\\
d\theta_5 &\equiv (2\mu \w^2+\beta \w^3 ) \wedge \pi_3,\\
d\theta_6 &\equiv \beta\, \w^2 \wedge \pi_3,
\end{aligned}
\right\} \mod \theta_0, \ldots, \theta_6,
$$
where
$$\pi_3 := d\kappa + (2\mu^2 - \kappa^2-4 )\w^2 + \mu(2\beta - \kappa)\w^3.$$
The vanishing of each of these 2-forms implies that, on an integral 3-fold,
$\pi_3$ must be a multiple of three linearly independent 1-forms.  Thus, $\pi_3$
vanishes on any integral 3-fold of $\I'$.  In particular, $\kappa$ is constant
along $\gamma$.

The criteria under which a helix such as $\gamma$ is smoothly closed are laid out in \cite{BGS} (see \S6,7).
Let $X(s)$ be a lift of $\gamma$ into $S^5$ (relative to the Hopf fibration), and let $E_1(s), E_3(s)$ be horizontal lifts along $X$ of the frame vectors
$\ve_1, \ve_3$.  Then $F(s)=(X,E_1,E_3)$ takes value in $U(3)$ and satisfies the constant-coefficient system
$$\dfrac{dF}{ds} = F K, \qquad \text{where } K = \begin{pmatrix} 0 & -1 & 0 \\ 1 & \ri \kappa & -\mu \\ 0 & \mu & \ri \beta \end{pmatrix}.$$
Then $\gamma$ is smoothly closed at length $L$ if and only if $F(s+L) = e^{\ri \theta I} F(s)$ for some $\theta\in \R$, i.e., the projection of $F(s)$ into the
quotient $SU(3) =  U(3)/S^1$ is $L$-periodic.  This in turn is equivalent to the eigenvalues $\epsilon_j$ of $K_0$ (the traceless part of $K$)
being integer multiplies of $2\pi \ri/L$.  Since these imaginary eigenvalues sum to zero, it is necessary
and sufficient that the ratio of any two of them be rational.  (For example, if $\epsilon_1/\epsilon_2 = n_1/n_2$ in lowest terms,
then $\gamma$ is closed at length $L = n_1 (2\pi \ri /\epsilon_1)$.)  The characteristic polynomial of $K_0$ is
$$\det(\lambda I - K_0) = \det( \lambda I - (K - \tfrac13 (\tr K)I)) = \lambda^3 +A \lambda + \ri B,$$
where
$$A = \mu^2 + \tfrac13(\beta^2 - \beta \kappa + \kappa^2) + 1,\qquad
B = \tfrac1{27}(2\beta^3 -3\beta^2\kappa - 3\beta \kappa^2 + 2\kappa^3) + \tfrac13(\mu^2(\beta+\kappa)+\kappa-2\beta).
$$
By replacing $K_0$ by a diagonal matrix with entries $(\epsilon_1, \epsilon_2, -\epsilon_1-\epsilon_2)$, we can calculate that the ratio $A^3/B^2$ is a rational function of $\epsilon_1/\epsilon_2$.  Using the values of the differentials
\begin{align*}
d\beta &= (\beta\kappa + 2\mu^2+2)\w^2 + 3\beta\mu \w^3, \\
d\mu &= \mu(2\kappa-\beta)\w^2+(\mu^2+\beta\kappa-\beta^2+1)\w^3\\
d\kappa &=(\kappa^2-2\mu^2+4)\w^2 +\mu(\kappa-2\beta)\w^3, \\
\end{align*}
(which are implied by the vanishing of $\theta_5, \theta_6$ and $\pi_3$ respectively along $N$)
we compute that $A^3/B^2$ is constant on $M$.  Thus, if the rationality condition is satisfied for a particular
helix $\gamma$, then it is satisfied for all helices, and all smoothly close up at length $L$.
\end{proof}

\begin{prop} Stark hypersurfaces in $\CP^2$ comprise a 3-parameter family, modulo isometries.  A unique
such hypersurface $M$ exists through any given point $p \in \CP^2$, given a choice of unitary frame $(\ve_1, \ldots, \ve_4)$
(where $\ve_4$ is to be the hyperface normal) at $p$ and prescribed initial values of $\beta, \kappa, \mu$ (with $\beta \ne 0$).
\end{prop}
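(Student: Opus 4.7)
The plan is to upgrade $\I'$ to a Frobenius Pfaffian system by one last prolongation, and then invoke the Frobenius theorem. The proof of Proposition \ref{helical} already showed that along any integral 3-fold of $\I'$ satisfying the independence condition, the 1-form $\pi_3$ must vanish, so it is natural to work on $\F\times\R^3$ with the enlarged Pfaffian system $\I''$ generated by $\theta_0,\dots,\theta_6$ together with $\theta_7:=\pi_3$; the integral 3-folds of $\I''$ with $\w^1\wedge\w^2\wedge\w^3\ne 0$ coincide with those of $\I'$. Note that $\dim(\F\times\R^3)=11$ and the eight generators $\theta_0,\dots,\theta_7$ are pointwise independent of $\w^1,\w^2,\w^3$, so Frobenius leaves will automatically have dimension $3$.

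The substance of the argument is checking that $d\theta_i$ lies in the algebraic ideal generated by $\theta_0,\dots,\theta_7$ for every $i$. For $i=0,1,2,3$ this follows from \eqref{the2forms} after substituting $\w^2_1=\theta_4+\kappa\,\w^1$, $\pi_1=\theta_5+\kappa(2\mu\,\w^2+\beta\,\w^3)$, and $\pi_2=\theta_6+\beta\kappa\,\w^2$: a brief bookkeeping check shows the residual $\w^2\wedge\w^3$ coefficients cancel. For $i=4,5,6$ the 2-form expressions exhibited in the proof of Proposition \ref{helical} already present each $d\theta_i$ as a wedge of a coframe form with $\pi_3=\theta_7$, hence in the ideal. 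The remaining congruence $d\theta_7=d\pi_3\equiv 0$ modulo $\I''$ is the main obstacle. I would attack it by direct expansion: apply $d(d\kappa)=0$, use the structure equations $d\w^a=-\w^a_b\wedge\w^b$ and $d\w^a_b=-\w^a_c\wedge\w^c_b+\Psi^a_b$ together with $\w^1_3=\w^2_4$ and $\w^3_2=\w^1_4$, substitute the Fubini--Study curvature 2-forms $\Psi^a_b$ recorded earlier, and replace $d\beta$, $d\mu$, $d\kappa$ by their explicit $\w^2,\w^3$-expressions (modulo $\I''$) obtained in Proposition \ref{helical}. I expect the contributions from the curvature terms and from the cubic/quadratic polynomials in $\beta,\mu,\kappa$ to cancel pairwise, but verifying this cleanly is the delicate calculation on which the argument turns.

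Granting the Frobenius check, the Frobenius theorem produces a unique integral 3-fold through every point of $\F\times\R^3$, and its projection to $\CP^2$ is a stark hypersurface passing through the chosen point $p$, realizing the prescribed adapted unitary frame and the prescribed values $(\beta,\mu,\kappa)$ with $\beta\ne 0$; uniqueness of the hypersurface through $p$ with these data is immediate from uniqueness of the Frobenius leaf. To obtain the parameter count, I use that the isometry group $PU(3)$ of $\CP^2$ acts freely and transitively on the unitary frame bundle $\F$ and trivially on the $\R^3$ factor; quotienting by this action absorbs all of the frame data and leaves only the three parameters $(\beta,\mu,\kappa)\in\R^3$ (with $\beta\ne 0$) as moduli, giving a 3-parameter family of stark hypersurfaces modulo isometries.
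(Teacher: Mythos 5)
Your proposal is correct and follows essentially the same route as the paper: adjoin $\pi_3$ to $\I'$ to get a rank-8 Frobenius Pfaffian system on the 11-dimensional manifold $\F\times\R^3$, apply the Frobenius theorem to get a unique 3-dimensional leaf through each point, and quotient by the transitive action of the isometry group on frames to leave the three parameters $(\beta,\mu,\kappa)$. The one computation you flag as delicate but do not complete, namely $d\pi_3\equiv 0$ modulo the enlarged ideal, is also left to the reader in the paper (``it is easy to check''), so your treatment matches the paper's level of detail.
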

\begin{proof}
Let $\J$ be the Pfaffian system on $\F \times \R^3$ generated by adjoining the 1-form $\pi_3$ to $\I'$;
as noted in the proof of Prop. \ref{helical}, the any integral 3-fold of $\I'$ is also an integral of $\J$.
It is easy to check that the system $\J$ is Frobenius, i.e., the exterior derivative
of each generator 1-form $\theta_0, \ldots, \theta_6, \pi_3$ can be expressed as a sum of wedge products involving
those 1-forms.  By the Frobenius Theorem, there exists a unique maximal integral 3-fold through each
point of the 11-dimensional manifold $\F \times \R^3$.  Moreover, the data listed in the last sentence of the proposition is precisely
enough to determine a unique point in this space.   Since the isometry group $SU(3)$ of $\CP^2$ acts transitively
on choices of point $p$ and a unitary frame at $p$, the set of stark hypersurfaces modulo ambient isometries is 3-dimensional.
\end{proof}

The Frobenius Theorem implies that there are local coordinates in which the system $\J$ becomes a system
of total ordinary differential equations in 8 unknowns (same as the rank of $\J$).  In fact, solutions may be determined
up to congruence by solving a much smaller system of ODE, in a geometrically natural set of local coordinates.
For, at points where one can easily compute that the following pair of 1-forms are closed on $N$:
$$\beta^{-1/3}\w^2, \qquad \mu^{4/3} \w^2 + \beta^{2/3}\mu^{1/3} \w^3.$$
Thus, away from points where $\mu \ne 0$, we may introduce smooth local coordinates $x,y$
such that
\begin{equation}\label{deexy}
dx=\beta^{-1/3}\w^2,\qquad dy = \mu^{4/3} \w^2 + \beta^{2/3}\mu^{1/3} \w^3.
\end{equation}
Since these 1-forms have the same span as $\w^2, \w^3$, we may express the differentials
of $\beta,\mu$ and $\kappa$ in terms of them, yielding a system of total differential
equations for these variables as functions of $x$ and $y$.  These equations 
may be expressed in rational form if we make the change of dependent variables
$$t=\dfrac{\kappa}{\beta}, \qquad u = \left(\dfrac{\mu}{\beta}\right)^{2/3}, \quad v = \beta^{2/3}.$$
Then we obtain the following system of total differential equations:
\begin{equation}\label{tuvsys}
\begin{aligned}
dt &= \dfrac{2(2-t)}{v}dx - 2u(t+1)dy,\\
du &= -\dfrac{2u}{v} dx +\dfrac{2}{3u}(t + v^{-3} -2u^3-1)dy,\\
dv &= \tfrac23(v^3(t-u^3) +2)dx + 2uv\,dy.
\end{aligned}
\end{equation}
We may rewrite the ODEs in the $y$-direction as a single third-order equation for $v$, which turns out to have the
following first integrals:
$$C := \tfrac13( (t-u^3)v^2- v^{-1}), \qquad D:=\tfrac13 v (t+1).$$
These are constant in the $y$-direction, and satisfy a simple ODE system in the $x$-direction:
\begin{equation}\label{CDode}
\dfrac{dC}{dx} = 4(C^2+D), \qquad \dfrac{dD}{dx} = 2(CD+1).
\end{equation}
The first integral $A^3/B^2$ can be expressed in terms of $C,D$ as
$$\dfrac{A^3}{B^2} = \dfrac{27(D^2-C)^3}{(2D^2-3CD-1)^2}.$$
Taken in reverse order, these equations enable us (in theory) to construct stark hypersurfaces in $\CP^2$
by a sequence of integrations:
\begin{enumerate}
\item Choose a constant value for $A^3/B^2$ which admits real values of $C$ and $D$, and solve the first order ODEs \eqref{CDode} for $C$ and $D$;
\item Replacing $t,u$ by their values in terms of $C,D$ and $v$, integrate the last equation in \eqref{tuvsys};
\item Determine an orthonormal coframe on the $xy$ domain by solving \eqref{deexy} for $\w^2, \w^3$;
\item Identify the $xy$ domain isometrically with an open subset $\Sigma$ of $\RP^2 \subset \CP^2$;
\item Integrate the Frenet equations \eqref{helixF} to produce helices through points of $\Sigma$,
and let $M$ be the union of these helices.
\end{enumerate}
\begin{remark*}
While these hypersurfaces are foliated by helices, they are different from the generalized helicoids of \cite{Baustere}.
The latter are austere submanifolds in Euclidean space, swept out by applying a 1-parameter family of screw motions to a $k$-dimensional subspace in $\R^{2k+1}$.
However, although each helix in $M$ is the orbit of a 1-parameter subgroup of $SU(3)$, the subgroup changes as we move across $\Sigma$.
Indeed, the conjugacy class of the subgroup is given by the values of $\epsilon_1, \epsilon_2$ and $-\epsilon_1-\epsilon_2$ (up to permutation),
and these vary along $\Sigma$ because neither $A$ nor $B$ is constant along $\Sigma$.
\end{remark*}

\section{Remarks on higher-dimensional examples}
\begin{defn} Following Prop. \ref{algprop}, we will say that a stark hypersurface is {\em reducible} if the distribution
$\calH$ contains a nonzero subspace that is both $\rA$- and $\rJ$-invariant, and {\em irreducible} otherwise.
\end{defn}

For an irreducible hypersurface $M\subset \CP^{n+1}$, Prop. \ref{algprop} asserts that at each point there
is a standard basis $(\ve_1, \ldots,  \ve_{2n+1})$ with respect to which the shape operator has the form
\eqref{irredform}.  It follows that we may define two orthogonal distributions
$$\calU = \{ \ve_1, \ldots, \ve_n\}, \qquad \calV = \{ \ve_{n+1}, \ldots, \ve_{2n+1}\}$$
which are null spaces for the shape operator.  (Note also that $\calV$ is totally real.)
In the case $n=1$ discussed in the last section, $\calV$ was tangent to a foliation of $M$ by totally geodesic surfaces,
and curves tangent to the 1-dimensional distribution $\calU$ were helices, i.e., orbits of a 1-parameter subgroup of $SU(3)$.
Accordingly, we make the following

\begin{conj}For an irreducible stark hypersurface,
\begin{itemize}
\item the distributions $\calU$ and $\calV$ are integrable;
\item the leaves tangent to $\calV$ are totally geodesic (hence, open subsets of $\RP^n$'s);
\item the leaves tangent to $\calU$ are orbits of $n$-dimensional tori in $SU(n+2)$.
\item Such hypersurfaces are determined by integrals of a Frobenius exterior differential system.
\end{itemize}
\end{conj}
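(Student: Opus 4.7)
The plan is to extend the exterior differential system machinery of Section 4 to arbitrary $n$. At each point of an irreducible stark hypersurface $M \subset \CP^{n+1}$, Prop.~\ref{algprop} furnishes a standard basis in which $[\rA]$ takes the form \eqref{irredform}, and this normal form is unique up to the isotropy subgroup $H \subset U_n$ of that matrix pattern. Pulling back along $M$ defines an $H$-reduction of the adapted unitary frame bundle, and augmenting $\F$ by the independent entries of $S$ and $d$ yields a manifold on which the pullbacks of the structure equations generate an EDS $\I$ whose integral $(2n+1)$-folds correspond to irreducible stark hypersurfaces equipped with an adapted standard frame. The first step is to write down $\I$ and its successive prolongations, modeled on the computation of $\theta_0,\ldots,\theta_6$ and $\pi_3$ in the $\CP^2$ case.

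For integrability of $\calU$ and $\calV$, observe that $\calV$ is annihilated by $\w^1,\ldots,\w^n$ while $\calU$ is annihilated by $\w^{n+1},\ldots,\w^{2n+1}$. Applying the Cartan Lemma to the 2-forms $d\theta_i$ should show, as in the proof of Prop.~\ref{ruledM}, that each $\w^\beta_\alpha$ with $\alpha\le n$, $\beta>n$ restricts on any integral element to lie in the appropriate span, forcing both Pfaffian systems to be Frobenius. For the second bullet, note that $\rJ$ sends $\calV$ into $\calU \oplus \R\nu$, so each leaf $\Sigma$ is totally real; the vanishing of the second fundamental form components of $\Sigma$ in each ambient normal direction follows by the same Pfaffian calculation, and Wolf's Theorem~4 of \cite{Wo} then identifies $\Sigma$ with an open subset of a totally geodesic real projective subspace.

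The main obstacle will be the torus-orbit statement for $\calU$-leaves. The strategy is to lift to $\Mhat \subset S^{2n+3}$ via the Hopf fibration, take horizontal lifts $E_1,\ldots,E_{n+2}$ of the frame vectors, and show that on a fixed $\calU$-leaf the augmented frame $F \in U(n+2)$ satisfies a total differential equation $dF = F \sum_{\alpha=1}^n K_\alpha\, dx^\alpha$ where the $K_\alpha$ are mutually commuting constant matrices whose span lies in a maximal torus of $\su(n+2)$. Commutativity of the $K_\alpha$ follows from Frobenius integrability of $\calU$ together with the standard-basis structure, while constancy of each $K_\alpha$ along a leaf requires showing that $dS_{ij}$ and $dd_i$ have no $\calU$-components on the final prolongation --- the higher-$n$ analogue of the vanishing of the $\w^1$-components of $d\beta$, $d\mu$, $d\kappa$ in Prop.~\ref{helical}. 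This property should in turn be forced by prolonging $\I$ enough times, after which one verifies by a Cartan-characters count (or by direct inspection, as in the proof of the 3-parameter proposition) that the fully prolonged system is Frobenius, yielding the fourth bullet. The delicate point is ruling out off-diagonal torsion in the $K_\alpha$ structure: a priori the frame equations along $\calU$ might not close into an abelian Lie subalgebra, and confirming abelianness may require identities deduced by iterating the prolongation beyond the first step.
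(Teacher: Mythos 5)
Note first that the statement you are proving is presented in the paper as a \emph{conjecture}: the author offers no general proof, only the remark that the claims have been verified by direct computation for $n=1,2,3$. So there is no argument in the paper to compare yours against, and the relevant question is whether your proposal actually closes the conjecture. It does not. What you have written is a program --- a plausible and well-chosen one, closely modeled on the $n=1$ analysis of Propositions~\ref{ruledM} and~\ref{helical} --- but every step that goes beyond the $n=1$ case is stated in the conditional: the Cartan Lemma argument ``should show'' that the relevant connection forms lie in the appropriate span; constancy of the $K_\alpha$ along $\calU$-leaves ``requires showing'' that $dS_{ij}$ and the components of $d$ have no $\calU$-components after prolongation; the Frobenius property of the prolonged system is to be ``verified by a Cartan-characters count.'' None of these computations is carried out, and for general $n$ they are not routine: the torsion of the system $\I$ now involves the full symmetric matrix $S$ and vector $d$ rather than two scalars $\mu,\beta$, and there is no a priori reason the first prolongation absorbs it. You yourself flag the hardest point --- that the frame equations along a $\calU$-leaf might not close into an abelian subalgebra of $\su(n+2)$ --- and leave it unresolved. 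That is precisely the content of the third bullet, so the proposal cannot be regarded as a proof of it.

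A secondary concern: your claim that the normal form \eqref{irredform} is ``unique up to the isotropy subgroup $H\subset U_n$ of that matrix pattern'' and hence defines an $H$-reduction of the frame bundle needs justification. Proposition~\ref{algprop} is a pointwise existence statement; to build a smooth EDS on a reduced bundle you must show the reduction can be made smoothly (e.g., that the relevant eigenvalue or rank data of $S$ and $d$ do not degenerate), or else work on the full $U_n$-bundle and carry the group action along, as the paper implicitly does for $n=1$ where the isotropy is trivial. In short: the architecture of your approach is the natural one and matches what the author presumably did for $n=2,3$, but as written it establishes nothing beyond what the paper already claims, and the conjecture remains open on the strength of this proposal.
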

\noindent
These conjectures have been verified in the cases $n=1,2,3$.

For a reducible hypersurface, there is at each point a split basis adapted to a $2k$-dimensional
$\rA$- and $\rJ$-invariant subspace $\calW \subset \calH$, with respect to which the shape operator has
the form \eqref{redform}.  Let $\calW^\perp$ denote its orthogonal complement (of dimension $2\ell+1$, where $k + \ell =n$) within $TM$.  We make the following

\begin{conj}For a reducible stark hypersurface,
\begin{itemize}
\item the distribution $\calW^\perp$ is integrable, but $\calW$ is not;
\item the leaves tangent to $\calW^\perp$ are irreducible stark hypersurfaces lying in totally geodesic copies
of $\CP^{\ell+1}$;
\item any two $\calW^\perp$-leaves are congruent to each other.
\item Such hypersurfaces are determined by integrals of a Frobenius exterior differential system.
\end{itemize}
\end{conj}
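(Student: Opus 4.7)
The plan is to mirror the moving-frames treatment of the $n=1$ case in \S4, now adapted to a split basis of the form \eqref{splitphi}. I would begin by setting up an exterior differential system $\I_{\mathrm{red}}$ on $\F \times \R^N$, where $N$ counts the independent entries of the blocks $P, Q, S, d$ in \eqref{redform}, whose integral $(2n+1)$-folds correspond precisely to local split unitary frames along reducible stark hypersurfaces. The generator 1-forms would be $\theta_0 = \w^{2n+2}$ (enforcing the hypersurface condition) together with contact-type forms $\theta_i = -\w^{2n+2}_i + [\rA]_{ij}\w^j$ that force the shape operator to take the block form \eqref{redform}, with $P, Q, S, d$ serving as fiber coordinates on the $\R^N$ factor.

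The first and most important step is to compute $d\theta$ modulo $\theta$ and examine how the torsion organizes. The off-diagonal zero blocks in \eqref{redform}, together with the fact that the Fubini--Study curvature forms couple tangent indices only through the $\rJ$-pairing, should force $d\w^i \equiv 0 \pmod{\w^1,\ldots,\w^{2k}}$ for $1 \le i \le 2k$ on any integral, giving integrability of $\calW^\perp$. Dually, an ambient curvature term analogous to the $4\w^1\wedge\w^2$ in $\Psi^1_2$ should survive in some $d\w^j$ with $j > 2k$, obstructing integrability of $\calW$. Once $\calW^\perp$ is integrable, each leaf $\Sigma$ together with the normal spans a $(2\ell+2)$-dimensional $\rJ$-invariant subspace of $T\CP^{n+1}$, since the structure vector $\ve_{2n+1} \in T\Sigma$ pairs with $\nu$ under $\rJ$ while the remaining directions are $\rJ$-invariant by \eqref{splitphi}. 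Verifying that the connection forms $\w^a_b$ with $a \le 2k < b$ vanish on $\Sigma$ will show that this subspace is parallel along $\Sigma$, so $\Sigma$ lies in a totally geodesic $\CP^{\ell+1}$; reading off the induced shape operator as the bottom-right block of \eqref{redform} yields exactly the irreducible form \eqref{irredform}, so $\Sigma$ is an irreducible stark hypersurface of $\CP^{\ell+1}$.

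For mutual congruence of leaves and the Frobenius property of the EDS, I would prolong $\I_{\mathrm{red}}$ by introducing coordinates for those derivatives of $P, Q, S, d$ not already determined by the torsion---the analogue of introducing $\kappa$ before Proposition \ref{helical}---and then verify that the prolonged Pfaffian ideal closes under $d$. The main obstacle will be controlling this prolongation: whereas the irreducible $n=1$ case required a single additional 1-form $\pi_3$, the reducible case contains a $2k \times 2k$ Hopf-like $P$-$Q$ block whose derivatives interact with the ambient curvature in a combinatorially heavier way, and the torsion-absorption bookkeeping is the delicate part. Once Frobenius is established, mutual congruence of the $\calW^\perp$-leaves should follow from the fact that transverse motion along $\calW$ transports the leaf data $(S,d)$ by a curve in the isometry group $SU(\ell+2)$ of the ambient $\CP^{\ell+1}$: the $\calW$-components of the connection forms span (a subalgebra isomorphic to) the Maurer--Cartan forms of this subgroup, so the induced diffeomorphism between nearby leaves is a $\CP^{\ell+1}$-isometry, and integration along $\calW$ then yields the global congruence.
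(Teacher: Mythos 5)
The statement you are addressing is not a theorem of the paper: it appears in the final section explicitly labelled as a conjecture, and the author states that it has been verified only in the single case $k=\ell=1$. There is therefore no proof in the paper to compare against, and your proposal should be judged as a research program rather than as an alternative route to an established result. As a program it is sensible and clearly modelled on the paper's own treatment of the irreducible $n=1$ case in \S 4 (an EDS on $\F\times\R^N$ with the shape-operator entries as fiber coordinates, torsion analysis, prolongation, Frobenius). But it is not a proof: every decisive step is deferred with ``should force,'' ``should survive,'' ``should follow.'' The entire content of the conjecture lives in exactly those steps --- whether the torsion of $d\theta_i$ actually organizes so that $\{\w^1,\dots,\w^{2k}\}$ generates a differentially closed ideal, whether the prolongation terminates and the resulting Pfaffian system closes under $d$, and whether the transverse flow along $\calW$ genuinely induces ambient isometries of the leaves (for which it is not enough that certain connection components ``span a subalgebra''; you must check that the induced metric, the second fundamental form, and the totally geodesic $\CP^{\ell+1}$ itself are transported). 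None of this is carried out, and the author's remark that verification succeeded only for $k=\ell=1$ is a warning that the bookkeeping you flag as ``delicate'' is precisely where the difficulty is concentrated.

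Two more specific points. First, your mechanism for the non-integrability of $\calW$ is off: the first-order obstruction to integrability of a distribution is carried by the connection forms in $d\w^j=-\w^j_i\wedge\w^i$, not directly by the curvature 2-forms $\Psi^a_b$. In fact there is a much cleaner argument available that bypasses the EDS entirely: $\calW$ is a nonzero $\rJ$-invariant subdistribution of the holomorphic distribution $\calH$, and the Levi form of the CR structure on a real hypersurface of $\CP^{n+1}$ is nondegenerate on any such subspace (compute $d\eta(X,\rJ X)$ for the contact form $\eta$ dual to the structure vector $W$); hence $[\calW,\calW]$ always has a component along $W$ and $\calW$ can never be integrable. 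You should use that instead. Second, for the claim that each $\calW^\perp$-leaf sits in a totally geodesic $\CP^{\ell+1}$, it does not suffice to check that the relevant $\w^a_b$ vanish on a single leaf; you need the $(2\ell+2)$-plane field to be parallel along the leaf in the ambient connection, which is again a statement about specific connection components that your torsion computation must actually produce. In short: the architecture is right and consistent with the paper's methods, but as written this is an outline of what would need to be verified, not a verification.
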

\noindent
These conjectures have been verified only in the case $k=\ell=1$.

\section*{Acknowledgements}
The author thanks Marianty Ionel, Tommy Murphy and Patrick Ryan for helpful discussions of this work.  He is
also grateful to Bogdan Suceava for encouraging its publication.

\end{document}